\newcommand\bi{\bm i}
\newcommand\bj{\bm j}
\newcommand\bk{\bm k}
\newtheorem{theorem}{Theorem}
\newtheorem{lemma}[theorem]{Lemma}
\newtheorem{cor}[theorem]{Corollary}
\newtheorem{prop}[theorem]{Proposition}
\begin{document}

\title{Functional calculus for dual quaternions}
\author{Stephen Montgomery-Smith}
\address{Department of Mathematics, University of Missouri, Columbia, MO 65211, USA\\
\rm\url{stephen@missouri.edu}\\
\rm\url{https://stephenmontgomerysmith.github.io}\\
}

\begin{abstract}  We give a formula for $f(\eta)$, where $f :\mathbb C \to \mathbb C$ is a continuously differentiable function satisfying
$f(\bar z) = \overline{f(z)}$,
and $\eta$ is a dual quaternion.  Note this formula is straightforward or well known if $\eta$ is merely a dual number or a quaternion.  If one is willing to prove the result only when $f$ is a polynomial, then the methods of this paper are elementary.

This preprint has not undergone peer review (when
applicable) or any post-submission improvements or corrections. The Version of Record of this
article is published in Adv. Appl. Clifford Algebras, and is available online at \url{https://doi.org/10.1007/s00006-023-01282-y}.
\end{abstract}

\keywords{Complex valid function, exponential, logarithm, Pauli-Pascal triangle}

\subjclass[2020]{20G20, 46H30}

\maketitle

A \emph{quaternion} is a quadruple of real numbers, written as $A = w + x \bi + y \bj + z \bk$, with the algebraic operations $\bi^2 = \bj^2 = \bk^2 = \bi \bj \bk = -1$.  Its \emph{conjugate} is $\bar A = A = w - x \bi - y \bj - z \bk$.  Its \emph{norm} is $|A| = (w^2+x^2+y^2+z^2)^{1/2}$.  It is called a \emph{pure} quaternion if $w = 0$.  We identify 3-vectors with pure quaternions, by identifying $\bi$, $\bj$, and $\bk$ with the three standard unit vectors.  A \emph{dual number} is a pair of real numbers, written as $\alpha = a + \epsilon b$, with the algebraic operation $\epsilon^2 = 0$.  A \emph{dual quaternion} is a pair of quaternions, written as $\eta = A + \epsilon B$, again with $\epsilon^2 = 0$.  The \emph{conjugate} of this dual quaternion is $\bar \eta = \bar A + \epsilon \bar B$.  The notion of dual quaternion goes back to Clifford \cite{clifford}.  For more information, including how they are used by the graphics card industry and in robotics, we refer the reader to \cite{adorno,agrawal,han-et-al,kavan-et-al,kavan-et-al-2,kenwright,kussaba-et-al,schilling1,schilling2,wiki-2,yang-et-al}.

The symbol $i$ denotes one of the square roots of $-1$, and in this paper won't be identified with $\bi$.

We say that a function $f:\Omega \to \mathbb C$ is \emph{valid} if $\Omega$ is a subset of $\mathbb C$ such that $z\in\Omega \Leftrightarrow \bar z \in \Omega$, and
\begin{equation}
f(\bar z) = \overline{f(z)} \quad \text{for $z \in \Omega$.}
\end{equation}
Note that if $f(x+iy)$ is a polynomial in $x$ and $y$:
\begin{equation}
f(x+iy) = \sum_{m,n} r_{m,n} x^m (iy)^n.
\end{equation}
then $f$ is valid if and only if the $r_{m,n}$ are real.

We show how to extend the definition of $f$, at least when $f$ is appropriately smooth, to dual quaternions in such a way that it is correct for polynomials.  In all of the statements of our results, when we say that a class of functions extends to a subset of dual quaternions, we mean the following standard definition of a functional calculus.
\begin{enumerate}
\item The class of functions contains $f(z) = 1$, $f(z) = z$, and if $f$ and $g$ are in this class, and $a$ is a real number, then $f+g$, $fg$, $\bar f$, and $a f$ are also in this class.
\item Given any dual quaternion $\eta$ from the prescribed subset, there is a map from the class to dual quaternions, $f \mapsto f(\eta)$, such that $f(z) = 1$ maps to $1$, $f(z) = z$ maps to $\eta$, $f+g$ maps to $f(\eta) + g(\eta)$, $fg$ maps to $f(\eta) g(\eta)$, $\bar f$ maps to $\overline{f(\eta)}$, and $a f$ maps to $a f(\eta)$.
\item If any topology is specified on the class of functions, then the map $\eta \mapsto f(\eta)$ is continuous.
\end{enumerate}
All of the results of this paper could be verified for polynomials simply by looking at the individual monomials, and indeed this is how Lemmas~\ref{f(d)} and~\ref{f(dq) commute} are best proved.  Another method is to verify the results if $f(z) = 1$ and $f(z) = z$, and show that the set of functions for which it is true is closed under addition, multiplication, conjugation, and multiplication by real numbers.  However, for Lemma~\ref{f(dq) anti commute}, these straightforward approaches are nevertheless quite mysterious, and we believe that our approach is more intuitive.

Once the results are verified for polynomials, we can extend to the wider class of functions using the Stone-Weierstrass Theorem \cite{rudin}.

We should mention that the formula for dual numbers is well known \cite{wiki-1}, and indeed we cite it as Lemma~\ref{f(d)}.

We also want to mention the remarkable, and rather different, approach taken by Selig  to this problem \cite{selig}.

If $f$ is continuously differentiable, denote
\begin{equation}
f_x(x+iy) = \dfrac{\partial}{\partial x} f(x+iy) , \quad
f_{iy}(x+iy) = -i\dfrac{\partial}{\partial y} f(x+iy) .
\end{equation}
(Thus the Cauchy-Riemann conditions can be stated as $f$ is analytic if and only if $f_x = f_{iy}$.)  Define the following real valued, continuous, functions, which are even in $y$:
\begin{align}
\label{g}
g(x+iy) &= \dfrac{f(x+iy) + f(x-iy)} 2, \\
\label{h}
h(x+iy) &= \begin{cases}
\dfrac{f(x+iy)-f(x-iy)}{2iy} &\text{if $y \ne 0$} \\
f_{iy} (x) &\text{if $y = 0$} ,
\end{cases}
\end{align}
so that
\begin{equation}
f(x+iy) = g(x+iy) + i y h(x+iy) .
\end{equation}

First we state how to extend $f$ to the quaternions.  The proof is straightforward, because any quaternion that is a unit 3-vector behaves formally exactly like $i$ in $\mathbb C$.

\begin{theorem}
\label{f(q)}
Suppose that $f:\Omega \to \mathbb C$ is a valid function.  Then $f$ extends to quaternions
\begin{equation}
f(a_0 + \bm a_1) = \begin{cases}
g(a_0+i|\bm a_1|) +h(a_0+i|\bm a_1|) \bm a_1 & \text{if $\bm a_1 \ne 0$} \\
f(a_0) & \text{if $\bm a_1 = 0$}, \end{cases}
\end{equation}
where $a_0$ is real and $\bm a_1$ is a 3-vector.  Furthermore the norm is preserved in that
\begin{equation}
| f(a_0 + \bm a_1) | = | f(a + i |\bm a_1|) | .
\end{equation}
\end{theorem}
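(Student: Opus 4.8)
The plan is to exploit the observation quoted just before the statement: if $\bm u$ is a unit $3$-vector, then as a pure quaternion it satisfies $\bm u^2=-1$, so the $\mathbb R$-linear map $\phi_{\bm u}\colon\mathbb C\to\mathbb H$ defined by $\phi_{\bm u}(x+iy)=x+y\bm u$ is a continuous homomorphism of real $*$-algebras. Indeed it sends $1$ to $1$; it is multiplicative precisely because $\bm u^2=-1=i^2$; it intertwines the two conjugations, since $\overline{x+y\bm u}=x-y\bm u=\phi_{\bm u}(\overline{x+iy})$; and it preserves norms, since $|x+y\bm u|=(x^2+y^2)^{1/2}=|x+iy|$. (That the class of valid functions is closed under addition, multiplication, conjugation, and multiplication by real numbers --- clause~(1) of the definition --- is a routine check on the decomposition into real and imaginary parts.)

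Next I would fix a quaternion $\eta=a_0+\bm a_1$ and define the evaluation map demanded by clause~(2). If $\bm a_1\ne0$, put $\bm u=\bm a_1/|\bm a_1|$ and $z_0=a_0+i|\bm a_1|$, where I tacitly assume $z_0\in\Omega$ so that the right-hand side of the asserted formula is defined, and set $f(\eta):=\phi_{\bm u}\bigl(f(z_0)\bigr)$. If $\bm a_1=0$, set $f(\eta):=f(a_0)$, noting that $f(a_0)\in\mathbb R$ because $a_0=\overline{a_0}$ and $f$ is valid. Then I check the defining properties: $f(z)=1$ maps to $\phi_{\bm u}(1)=1$, and $f(z)=z$ maps to $\phi_{\bm u}(z_0)=a_0+|\bm a_1|\bm u=\eta$; compatibility with $f+g$, $fg$, $a f$, and $\bar f$ follows immediately from the corresponding properties of $\phi_{\bm u}$, using validity in the form $\overline{f(z_0)}=f(\overline{z_0})$ for the conjugation clause. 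In the degenerate case $\bm a_1=0$ these are all immediate because $f(a_0)$ is real. Continuity of $f\mapsto f(\eta)$, clause~(3), is inherited from continuity of $\phi_{\bm u}$ and of the evaluation $f\mapsto f(z_0)$.

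Finally I would rewrite $\phi_{\bm u}(f(z_0))$ in the closed form stated in the theorem. Since $f$ is valid, $g(z_0)=\realpart f(z_0)$ and $|\bm a_1|\,h(z_0)=\imagpart f(z_0)$ are both real, so $f(z_0)=g(z_0)+i|\bm a_1|h(z_0)$ and hence
\[
f(\eta)=\phi_{\bm u}\bigl(f(z_0)\bigr)=g(z_0)+h(z_0)\,\bigl(|\bm a_1|\bm u\bigr)=g(z_0)+h(z_0)\,\bm a_1,
\]
which is exactly the stated expression; the case $\bm a_1=0$ matches $f(a_0)$ directly. The norm identity $|f(\eta)|=|f(z_0)|$ is then just the norm-preservation of $\phi_{\bm u}$ (and is trivial when $\bm a_1=0$).

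I expect no serious obstacle here --- the argument is as elementary as the paper advertises. The only points needing a little care are the multiplicativity of $\phi_{\bm u}$, which is exactly the phenomenon that a unit $3$-vector ``behaves like $i$''; the fact that $g$ and $h$ are real-valued, so that replacing $i$ by $\bm u$ does not clash with conjugation; and the separate treatment of $\bm a_1=0$, where one must note that $f(a_0)$ is automatically real.
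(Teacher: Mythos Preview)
Your proposal is correct and is precisely the elaboration of the paper's one-line justification that a unit $3$-vector ``behaves formally exactly like $i$'': the paper gives no further proof, and your construction of the $*$-homomorphism $\phi_{\bm u}$, verification of the functional-calculus axioms, and norm identity make this rigorous in exactly the intended way.
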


If $f$ is a valid polynomial, then a formula that extends $f$ to all dual quaternions is essentially stated in \cite{wiki-1}.  If $A$ and $B$ are quaternions, then
\begin{equation}
\label{proto f(dq)}
f(A + \epsilon B) = f(A) + \epsilon \left.\frac d{dr} f(A + rB) \right|_{r=0}.
\end{equation}
If $A$ and $B$ commute, this immediately implies Lemma~\ref{f(dq) commute} below.  But if $A$ and $B$ do not commute, it is not immediately apparent how to use this formula.  Thus we now state the main result of this paper.

\begin{theorem}
\label{f(dq)}
Let $f:\Omega \to \mathbb C$ be a valid continuously differentiable function, where $\Omega$ is open in $\mathbb C$.  Define $h$ by equation~\eqref{h}.  Then $f$ can be extended to a continuous function on all dual quaternions as follows.  Given quaternions $A$ and $B$, decompose $A = a_0 + \bm a_1$ and $B = b_0 + \bm b_1 + \bm b_2$, where $a_0$ and $b_0$ are real, $\bm a_1$, $\bm b_1$ and $\bm b_2$ are 3-vectors, $\bm b_1$ is parallel to $\bm a_1$, and $\bm b_2$ is perpendicular to $\bm a_1$.  Write $B_1 = b_0 + \bm b_1$.  If $a + i |\bm a_1| \in \Omega$, then
\begin{align}
f(A + \epsilon B)
&= g(A) 
+ \epsilon f_x(A) b_0
+ h(A) (1 + \epsilon \bm b_2)
+ \epsilon f_{iy}(A) \bm b_1 \\
&= f(A) 
+ \epsilon f_x(A) b_0
+ \epsilon f_{iy}(A) \bm b_1
+ \epsilon h(A) \bm b_2 .
\end{align}
\end{theorem}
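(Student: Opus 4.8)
The plan is to prove the two displayed identities first for valid polynomials $f$ and then to obtain the general continuously differentiable case by approximation. The polynomial case rests on the elementary remark behind~\eqref{proto f(dq)}: since $(\epsilon B)^2 = \epsilon^2 B^2 = 0$, for \emph{every} pair of quaternions $A$, $B$ and every polynomial $f$ one has
\begin{equation*}
f(A + \epsilon B) = f(A) + \epsilon\, L_A(B), \qquad L_A(B) := \left.\frac{d}{dr} f(A + rB)\right|_{r=0} .
\end{equation*}
Since $f$ is a polynomial, $L_A$ is the differential at $0$ of the polynomial map $B\mapsto f(A+B)$, hence real-linear in $B$. Decomposing $B = B_1 + \bm b_2$ with $B_1 = b_0 + \bm b_1$ and $\bm b_2 \perp \bm a_1$ as in the statement, it therefore suffices to compute $L_A(B_1)$ and $L_A(\bm b_2)$ separately; these are the contents of Lemma~\ref{f(dq) commute} (the parallel case, where $B_1$ commutes with $A$) and Lemma~\ref{f(dq) anti commute} (the perpendicular case), so the argument reduces to those two lemmas together with this linear superposition.

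For the parallel part, $A$ and $B_1$ both lie in the commutative subalgebra generated by $1$ and $\bm a_1$. When $\bm a_1 \ne 0$ this subalgebra is a copy of $\mathbb C$ under $\bm a_1/|\bm a_1| \leftrightarrow i$, and by Theorem~\ref{f(q)} the functional calculus on it is ordinary evaluation of $f$ on $\mathbb C$ with $A \leftrightarrow a_0 + i|\bm a_1|$. Thus $r\mapsto f(A+rB_1)$ is $f$ restricted to a straight line in $\mathbb C$, and its derivative at $r=0$ is given by the ordinary real chain rule; using $\partial_x f = f_x$, $\partial_y f = i f_{iy}$ and translating back yields $L_A(B_1) = f_x(A) b_0 + f_{iy}(A)\bm b_1$. (The case $\bm a_1 = 0$ is the same computation inside the real subalgebra, or follows by continuity in $A$.)

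The perpendicular part is the heart of the matter. Since $\bm b_2$ is a $3$-vector orthogonal to $\bm a_1$, it commutes with the scalar $a_0$ and anticommutes with $\bm a_1$, so $\bm b_2 A = (a_0 - \bm a_1)\bm b_2 = \bar A\,\bm b_2$, and hence $\bm b_2 A^m = \bar A^m\bm b_2$ for all $m$. For $f(z) = z^n$ this converts a noncommutative sum into a commutative one,
\begin{equation*}
L_A(\bm b_2) = \sum_{k=0}^{n-1} A^k \bm b_2 A^{\,n-1-k} = \Bigl(\sum_{k=0}^{n-1} A^k \bar A^{\,n-1-k}\Bigr)\bm b_2 ,
\end{equation*}
and, $A$ and $\bar A$ commuting, the bracket is the geometric sum $(A^n - \bar A^n)(A - \bar A)^{-1}$ when $\bm a_1 \ne 0$. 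Under the identification with $\mathbb C$ this is exactly $h_{z^n}(a_0 + i|\bm a_1|)$, a real number, which is the functional-calculus value $h(A)$ for $f(z) = z^n$ (the degenerate case $\bm a_1 = 0$ giving $n a_0^{n-1} = h(a_0)$). Summing over monomials with real coefficients gives $L_A(\bm b_2) = h(A)\bm b_2$ for every valid polynomial $f$. Combined with the parallel computation this is the second displayed identity of the theorem, and the first follows from it via $f(A) = g(A) + h(A)\bm a_1$ from Theorem~\ref{f(q)}.

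To remove the polynomial hypothesis, note that each of $g$, $h$, $f_x$, $f_{iy}$ depends continuously on $f$ in the topology of $C^1$-convergence on compact subsets of $\Omega$, so the right-hand side of the asserted formula is continuous in $f$ in that topology; since the valid polynomials are dense in the valid $C^1$ functions for this topology, the identity extends by continuity via the Stone--Weierstrass theorem. I expect the perpendicular step --- the collapse of $\sum_k A^k\bar A^{\,n-1-k}$ to the real scalar $h_{z^n}(A)$, equivalently Lemma~\ref{f(dq) anti commute} --- to be the main obstacle, since it is the only place where the noncommutativity of the quaternions genuinely enters; the rewriting $\bm b_2 A = \bar A\bm b_2$ is what tames it, and is the conceptual content behind the Pauli--Pascal bookkeeping mentioned in the introduction.
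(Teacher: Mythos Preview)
Your argument is correct, and it is organized differently from the paper's. The paper first invokes Lemma~\ref{f(dq) commute} to reduce to showing $f(A+\epsilon B)=f(A+\epsilon B_1)+\epsilon h(A)\bm b_2$, then peels off the real part $a_0$ via Lemma~\ref{f(d)} (writing $f(a_0+\epsilon b_0+iy)=u(iy)+\epsilon v(iy)$) and finally applies Lemma~\ref{f(dq) anti commute} to $u$ and $v$ on the imaginary axis. Its proof of Lemma~\ref{f(dq) anti commute} is not monomial-by-monomial: it writes $f(z)=k(z^2)+l(z^2)z$ and exploits the identity $\eta^2=\alpha^2$ (where $\alpha=\bm a+\epsilon\bm b_1$, $\eta=\alpha+\epsilon\bm b_2$) to collapse everything at once. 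You instead use the real-linearity of $L_A(B)=\left.\tfrac{d}{dr}f(A+rB)\right|_{r=0}$ to split $B=B_1+\bm b_2$ directly, and for the perpendicular piece you compute $L_A(\bm b_2)$ for $z^n$ via the conjugation identity $\bm b_2 A=\bar A\,\bm b_2$ and the geometric sum $\sum_k A^k\bar A^{\,n-1-k}=(A^n-\bar A^n)/(A-\bar A)$.

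The paper explicitly acknowledges that a monomial-by-monomial verification of the anticommuting case is possible but calls it ``quite mysterious'' and presents the $k(z^2),l(z^2)$ trick as more intuitive. Your conjugation identity $\bm b_2 A^m=\bar A^m\bm b_2$ makes the monomial route transparent rather than mysterious, so your version has genuine expository value; conversely, the paper's square-substitution argument avoids the sum over monomials entirely and generalizes cleanly to the anticommuting operator setting of Proposition~\ref{analytic}. Both proofs finish the same way, by Stone--Weierstrass approximation in $C^1$.
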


Before proving this result, let us provide some examples.  First something simple.

\begin{cor}
With the hypotheses of Theorem~\ref{f(dq)} we have
\begin{align}
\overline{A + \epsilon B} &= \overline{A} + \epsilon \overline{B} \\
|A + \epsilon B| &= |A| + \epsilon \frac{\text{\rm Re}(A \overline B)}{|A|} \quad (A \ne 0) .
\end{align}
\end{cor}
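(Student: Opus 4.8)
The plan is to deduce both identities by applying Theorem~\ref{f(dq)} to two explicitly chosen valid functions, namely $f(z) = \overline{z}$ for the conjugation formula and $f(z) = |z|$ for the norm formula. The first step in each case is to check the hypotheses of the theorem. For $f(x+iy) = x - iy$ this is trivial: it is a polynomial in $x$ and $y$ with real coefficients, hence valid, and it is $C^\infty$ on all of $\mathbb C$. For $f(x+iy) = \sqrt{x^2+y^2}$ validity is immediate because $f$ is real-valued (so $f(\overline z) = f(z) = \overline{f(z)}$), and $f$ is continuously differentiable on $\Omega = \mathbb C\setminus\{0\}$; the hypothesis $a_0 + i|\bm a_1| \in \Omega$ then becomes precisely the restriction $A \ne 0$ stated in the corollary.

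The second step is to compute the quantities $g$, $h$, $f_x$, $f_{iy}$ from \eqref{g} and \eqref{h} for each function and substitute into Theorem~\ref{f(dq)}. For $f(z) = \overline z$ one finds $g(x+iy) = x$, $h \equiv -1$, $f_x \equiv 1$, $f_{iy} \equiv -1$, so by Theorem~\ref{f(q)} we get $f(A) = a_0 - \bm a_1 = \overline A$; the second of the two expressions in Theorem~\ref{f(dq)} then reads $\overline A + \epsilon b_0 - \epsilon\bm b_1 - \epsilon\bm b_2 = \overline A + \epsilon\overline B$. (This merely records that the functional calculus is consistent with the conjugation $\overline\eta = \overline A + \epsilon\overline B$ fixed at the start of the paper, so one could just as well treat the first identity as a definition.) For $f(z) = |z|$ one finds $g(x+iy) = \sqrt{x^2+y^2}$, $h \equiv 0$, $f_x(x+iy) = x/\sqrt{x^2+y^2}$, and $f_{iy}(x+iy) = -iy/\sqrt{x^2+y^2}$, so by Theorem~\ref{f(q)} $f(A) = |A|$ — which is also the norm-preservation clause of that theorem. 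The second formula of Theorem~\ref{f(dq)} now gives $|A + \epsilon B| = |A| + \epsilon a_0 b_0/|A| - \epsilon\,\bm a_1\bm b_1/|A|$, where $\bm a_1\bm b_1$ denotes a product of quaternions. Since $\bm b_1$ is parallel to $\bm a_1$, this product equals $-\bm a_1\cdot\bm b_1$ (the cross term vanishes), and since $\bm b_2$ is perpendicular to $\bm a_1$ we have $\realpart(A\overline B) = a_0 b_0 + \bm a_1\cdot\bm b_1$; combining these yields $|A + \epsilon B| = |A| + \epsilon\,\realpart(A\overline B)/|A|$.

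I do not expect a genuine obstacle; the only things needing care are bookkeeping. The trickiest points are reading off from Theorem~\ref{f(dq)} how the complex numbers $f_x(a_0+i|\bm a_1|)$ and $f_{iy}(a_0+i|\bm a_1|)$ are to be interpreted as quaternions — one replaces $i$ by the unit vector $\bm a_1/|\bm a_1|$ — checking that $f(z) = |z|$ meets the hypotheses only on the punctured plane, and correctly evaluating the noncommutative product $\bm a_1\bm b_1$ so as to match it with $\realpart(A\overline B)$. As a cross-check, and in fact as an alternative route to the norm formula that bypasses Theorem~\ref{f(dq)}, one may use the first identity to compute $(A + \epsilon B)\overline{(A + \epsilon B)} = |A|^2 + 2\epsilon\,\realpart(A\overline B)$, observe that this is a dual number, and apply the square root via the standard dual-number formula (Lemma~\ref{f(d)}) to recover $|A + \epsilon B| = |A| + \epsilon\,\realpart(A\overline B)/|A|$.
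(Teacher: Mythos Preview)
Your proof is correct and follows exactly the approach the paper intends: the corollary is presented as a first ``simple'' example immediately after Theorem~\ref{f(dq)}, with no proof given, so the implicit argument is precisely to plug $f(z)=\bar z$ and $f(z)=|z|$ into that theorem and simplify, which you carry out accurately. Your alternative derivation of the norm formula via $\eta\bar\eta=|A|^2+2\epsilon\,\realpart(A\bar B)$ and Lemma~\ref{f(d)} is a nice consistency check but is not needed.
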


Next, we compute the exponential function.  Formulas for the exponential and logarithm are given in \cite{wang-et-al}, but we believe ours are more explicit.  Another formula for the exponential and the logarithm is given in \cite{selig}, with a correction for the logarithm in \cite{wu-et-al}.  While their formula gives the same result for the exponential, this is not immediately obvious, and we haven't checked for the logarithm.  A formula for the exponential and logarithm is also given in \cite{han-et-al}, but we believe that their formula only works if $A$ and $B$ commute.

\begin{cor}
\label{exp dq}
With the hypotheses of Theorem~\ref{f(dq)} we have
\begin{multline}
\label{exp theta}
\exp(A + \epsilon B ) \\
= e^{a_0} \left(\left(\cos(|\bm a_1|) + \dfrac{\sin(|\bm a_1|)}{|\bm a_1|} \bm a_1 \right) \left(1 + \epsilon B_1\right) + \epsilon\frac{\sin(|\bm a_1|)}{|\bm a_1|} \bm b_2\right),
\end{multline}
where if $\bm a_1 = 0$, we set $\sin(|\bm a_1|)/|\bm a_1| = 1$.
\end{cor}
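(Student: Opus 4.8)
The plan is to apply Theorem~\ref{f(dq)} directly to the entire function $f(z) = \exp(z)$. This function is valid, since $\overline{\exp(z)} = \exp(\bar z)$, and it is continuously differentiable on $\Omega = \mathbb C$, so all the hypotheses are met and in particular $a_0 + i|\bm a_1| \in \Omega$ automatically. First I would record the auxiliary data: writing $z = x+iy$ and $\exp(x+iy) = e^x(\cos y + i\sin y)$, formulas \eqref{g} and \eqref{h} give
\[
g(x+iy) = e^x\cos y, \qquad h(x+iy) = e^x \frac{\sin y}{y}
\]
(with $h(x) = e^x$ when $y = 0$), and because $\exp$ is analytic we have $f_x = f_{iy} = \exp$.

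The one point requiring a little care is that in Theorem~\ref{f(dq)} the symbols $g(A)$, $h(A)$, $f_x(A)$, and $f_{iy}(A)$ refer to the quaternionic extensions \emph{of those auxiliary functions}, via Theorem~\ref{f(q)}, not to $\exp$ itself. Since $g$ and $h$ are real-valued and even in $y$ they are valid, their own ``$g$'' parts equal themselves and their own ``$h$'' parts vanish, so Theorem~\ref{f(q)} collapses to $g(A) = g(a_0 + i|\bm a_1|) = e^{a_0}\cos|\bm a_1|$ and $h(A) = h(a_0 + i|\bm a_1|) = e^{a_0}\sin(|\bm a_1|)/|\bm a_1|$, both scalars. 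For $f_x = f_{iy} = f = \exp$, Theorem~\ref{f(q)} gives $f_x(A) = f_{iy}(A) = f(A) = \exp(A) = e^{a_0}\bigl(\cos|\bm a_1| + \frac{\sin|\bm a_1|}{|\bm a_1|}\bm a_1\bigr)$.

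Finally I would substitute these into the second displayed identity of Theorem~\ref{f(dq)}, namely $f(A+\epsilon B) = f(A) + \epsilon f_x(A)\, b_0 + \epsilon f_{iy}(A)\,\bm b_1 + \epsilon h(A)\,\bm b_2$. Using the coincidence $f_x(A) = f_{iy}(A) = \exp(A)$, the first three terms combine to $\exp(A)\bigl(1 + \epsilon b_0 + \epsilon\bm b_1\bigr) = \exp(A)(1 + \epsilon B_1)$, and inserting the closed forms for $\exp(A)$ and $h(A)$ yields exactly \eqref{exp theta}; the $\bm a_1 = 0$ case matches the stated convention $\sin(|\bm a_1|)/|\bm a_1| = 1$, consistent with the $y = 0$ branches above. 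There is no genuine obstacle here; the only things to keep straight are the distinction between the functional calculus of $f$ and that of the derived functions $g$, $h$, $f_x$, $f_{iy}$, and the observation that $f_x = f_{iy}$ (which holds because $\exp$ is entire) is precisely what lets $b_0$ and $\bm b_1$ recombine into $B_1 = b_0 + \bm b_1$.
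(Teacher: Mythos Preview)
Your proposal is correct and is exactly the intended approach: the paper states this corollary as a direct application of Theorem~\ref{f(dq)} without spelling out the computation, and your proof supplies precisely that computation. Your care in distinguishing the functional calculus of $f$ from that of the auxiliary functions $g,h,f_x,f_{iy}$, and in noting that $f_x=f_{iy}$ for analytic $f$ is what allows $b_0$ and $\bm b_1$ to recombine into $B_1$, is apt and makes the argument complete.
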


We can compute the logarithm in the same way, but then we can only get the principal value.  Instead we define a logarithm as a right inverse to the exponential function: $\exp(\log(\eta)) = \eta$.

\begin{cor}
\label{log dq}
Assume the hypotheses of Theorem~\ref{f(dq)}, with $A \ne 0$.  Let $t$ be the angle for a choice of polar coordinates for $(a_0, |\bm a_1|)$.  If $\bm a_1 \ne 0$, then a choice of $\log(A+\epsilon B)$ is
\begin{equation}
\log(|A|) + \dfrac t{|\bm a_1|} (\bm a_1 + \epsilon \bm b_2) + \epsilon \dfrac1{|A|^2} \bar A_1 B_1 .
\end{equation}
In the case $\bm a_1 = 0$, and $a_0 \ne 0$, we can assume $t = n \pi$ for some integer $n$, and $\bm b_2 = 0$.  Let $\bm p$ be any 3-vector perpendicular to $\bm b_1$ if $n \ne 0$, and $\bm p = 0$ if $n = 0$.  Then a choice of $\log(A + \epsilon B)$ is
\begin{equation}
\label{t=n pi}
\log(|a_0|) + n \pi \dfrac{\bm b_1}{|\bm b_1|} + \epsilon \dfrac 1{a_0} B_1 + \epsilon \bm p,
\end{equation}
where if $\bm b_1 = 0$, we interpret $\bm b_1 / |\bm b_1|$ as any unit 3-vector.
\end{cor}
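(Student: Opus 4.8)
The plan is to use that a logarithm is, by definition, any right inverse of the exponential: to verify that the displayed dual quaternion $L$ is a legitimate value of $\log(A+\epsilon B)$, I would rewrite $L$ in the normal form required by Theorem~\ref{f(dq)}, apply the exponential formula of Corollary~\ref{exp dq} to $L$, and check that the result is $A+\epsilon B$. Thus the argument is a direct verification with no hidden idea, only bookkeeping, and I would split it into the two cases of the statement.

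In the case $\bm a_1\neq 0$ I would write $L=C+\epsilon D$ where $C$ has real part $\log|A|$ and $3$-vector part $\bm c_1:=\frac{t}{|\bm a_1|}\bm a_1$ (nonzero, since $\bm a_1\neq0$ forces the polar angle $t$ off $\pi\mathbb Z$), and $D=\frac{t}{|\bm a_1|}\bm b_2+\frac1{|A|^2}\bar A_1B_1$ with $A_1:=a_0+\bm a_1=A$ (the analogue of $B_1$). The two algebraic facts to use are: since $\bm a_1\parallel\bm b_1$ these commute and $\bm a_1\bm b_1=-\bm a_1\cdot\bm b_1$, so $\bar A_1B_1=(a_0b_0+\bm a_1\cdot\bm b_1)+(a_0\bm b_1-b_0\bm a_1)$, whose scalar and $\bm a_1$-parallel parts are exactly what the normal form wants while $\frac{t}{|\bm a_1|}\bm b_2$ supplies the $\bm a_1$-perpendicular part, so that $D_1=\frac1{|A|^2}\bar A_1B_1$; and $A\bar A_1=|A|^2$, hence $AD_1=B_1$. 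Feeding $C+\epsilon D$ into Corollary~\ref{exp dq} and using the polar identities $\cos t=a_0/|A|$, $\sin t=|\bm a_1|/|A|$ (the evenness of $\cos$ and of $\theta\mapsto\sin\theta/\theta$ makes the branch of $t$ irrelevant), the factor $\cos|\bm c_1|+\frac{\sin|\bm c_1|}{|\bm c_1|}\bm c_1$ becomes $A/|A|$, the perpendicular term becomes $\epsilon\bm b_2/|A|$, and $e^{\log|A|}=|A|$; the exponential therefore collapses to $A(1+\epsilon D_1)+\epsilon\bm b_2=A+\epsilon B_1+\epsilon\bm b_2=A+\epsilon B$, as required.

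In the case $\bm a_1=0$, $a_0\neq0$ one may take $\bm b_2=0$, so $B=B_1$. I would write $L=C+\epsilon D$ with $C$ having real part $\log|a_0|$ and $3$-vector part $\bm c_1:=n\pi\,\bm b_1/|\bm b_1|$, and $D=\frac1{a_0}B_1+\bm p$; now the dual part must be decomposed along $\bm b_1$, which yields $D_1=\frac1{a_0}B_1$ and perpendicular part $\bm p$ (using $\bm p\perp\bm b_1$). Since $\cos(|n|\pi)=(-1)^n$ and $\sin(|n|\pi)=0$, Corollary~\ref{exp dq} collapses to $|a_0|(-1)^n(1+\epsilon\frac1{a_0}B_1)$, and this equals $A+\epsilon B$ because $|a_0|(-1)^n=a_0$ --- indeed $t=n\pi$ being a polar angle of $(a_0,0)$ means $n$ is even exactly when $a_0>0$. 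The $n=0$ subcase uses the convention $\sin0/0=1$ and $\bm p=0$, and reading $\bm b_1/|\bm b_1|$ as an arbitrary unit vector when $\bm b_1=0$ changes nothing, since then $B_1$ has no $3$-vector part.

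The main obstacle I anticipate is purely organizational rather than conceptual: correctly matching $L$ to the hypotheses of Theorem~\ref{f(dq)}, which means decomposing the dual part $D$ relative to the \emph{new} $3$-vector direction $\bm c_1$ --- the $\bm a_1$ direction in the first case, the $\bm b_1$ direction in the second --- into parallel and perpendicular components, and pinning down the branch of the polar angle $t$ so that $e^{\log|A|}$, $\cos t$ and $\sin t$ reassemble into $A/|A|$ with the correct sign. Everything else is routine quaternion arithmetic.
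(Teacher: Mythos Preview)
Your proposal is correct and is precisely the argument the paper intends: the text preceding Corollary~\ref{log dq} declares that the logarithm is to be taken as a right inverse of the exponential, so the (unwritten) proof is exactly the verification via Corollary~\ref{exp dq} that you outline, and your bookkeeping in both cases is accurate. The only spot worth tightening is the degenerate subcase $\bm a_1=0$, $n\neq0$, $\bm b_1=0$: there the direction $\bm c_1$ is along an arbitrary unit vector $\bm u$, and for the perpendicular term in Corollary~\ref{exp dq} to vanish you need $\bm p\perp\bm u$, not merely $\bm p\perp\bm b_1$; this is really an ambiguity in the statement rather than in your argument.
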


Next we give a formula for powers.  For $\alpha \in \mathbb R$ and $z \in \mathbb C \setminus (-\infty,0]$, let $z^\alpha$ denote the principle value.

\begin{cor}
With the hypotheses of Theorem~\ref{f(dq)}, if $A \notin (-\infty,0]$, then we have
\begin{equation}
(A + \epsilon B )^\alpha =
A^\alpha + \epsilon \alpha A^{\alpha - 1} B_1 + \epsilon \frac{\text{\rm Im}(A^\alpha)}{\text{\rm Im}(A)} \bm b_2 ,
\end{equation}
where if $\text{\rm Im}(A) = 0$, we set ${\text{\rm Im}(A^\alpha)}/{\text{\rm Im}(A)}$ to $\alpha A^{\alpha - 1} $.
\end{cor}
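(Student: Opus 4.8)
The plan is to apply Theorem~\ref{f(dq)} directly to the valid function $f(z) = z^\alpha$ (principal value) on the open set $\Omega = \mathbb C \setminus (-\infty,0]$. First I would check the hypotheses: $\Omega$ is open and closed under conjugation, the principal branch satisfies $\overline{z^\alpha} = \bar z^{\,\alpha}$ on $\Omega$ so that $f$ is valid, and $f$ is holomorphic, hence continuously differentiable, on $\Omega$. I would also verify that the hypothesis $a_0 + i|\bm a_1| \in \Omega$ of Theorem~\ref{f(dq)} is equivalent to the stated hypothesis $A \notin (-\infty,0]$: a real quaternion $A = a_0$ lies in $(-\infty,0]$ exactly when $a_0 \le 0$, i.e. exactly when $\bm a_1 = 0$ and $a_0 \le 0$, which is the same as $a_0 + i|\bm a_1| \in (-\infty,0]$.

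Next I would use holomorphy to identify the coefficient functions appearing in Theorem~\ref{f(dq)}. Since $f$ is analytic, the Cauchy--Riemann condition gives $f_x = f_{iy} = f'$, and $f'(z) = \alpha z^{\alpha-1}$, so by real linearity of the quaternionic functional calculus the extensions satisfy $f_x(A) = f_{iy}(A) = \alpha A^{\alpha-1}$, where $A^{\alpha-1}$ is the quaternionic extension from Theorem~\ref{f(q)} of $z^{\alpha-1}$. For $h$, writing $f(x+iy) = g(x+iy) + iy\,h(x+iy)$ with $g,h$ real-valued shows $h(x+iy) = \imagpart(z^\alpha)/y$ for $y \ne 0$, while $h(x) = f_{iy}(x) = \alpha x^{\alpha-1}$ for $y = 0$; because $h$ is even in $y$, its quaternionic extension at $A = a_0 + \bm a_1$ is the scalar $h(a_0 + i|\bm a_1|)$, which is exactly $\imagpart(A^\alpha)/\imagpart(A)$ under the usual identification, with limiting value $\alpha A^{\alpha-1}$ when $\bm a_1 = 0$ (note $A \notin (-\infty,0]$ forces $a_0 > 0$ in that case).

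Finally I would substitute into the second formula of Theorem~\ref{f(dq)},
\begin{equation*}
f(A + \epsilon B) = f(A) + \epsilon f_x(A)\, b_0 + \epsilon f_{iy}(A)\, \bm b_1 + \epsilon h(A)\, \bm b_2 ,
\end{equation*}
and collect terms: $f(A) = A^\alpha$; by left distributivity of quaternion multiplication $\epsilon f_x(A)b_0 + \epsilon f_{iy}(A)\bm b_1 = \epsilon \alpha A^{\alpha-1}(b_0 + \bm b_1) = \epsilon \alpha A^{\alpha-1} B_1$; and $\epsilon h(A)\bm b_2 = \epsilon\,\frac{\imagpart(A^\alpha)}{\imagpart(A)}\,\bm b_2$. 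This gives precisely the claimed identity.

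I expect the only real difficulty to be bookkeeping rather than mathematics: pinning down that $A^\alpha$ and $A^{\alpha-1}$ on the right-hand side are the Theorem~\ref{f(q)} extensions of the principal-value power functions, that the product $\alpha A^{\alpha-1}B_1$ is quaternion multiplication, and that the scalar $h(A)$ genuinely equals $\imagpart(A^\alpha)/\imagpart(A)$ together with its $\bm a_1 = 0$ convention. Once these identifications are made, no genuinely hard step remains, since everything follows from the analyticity of $z^\alpha$ and a single application of Theorem~\ref{f(dq)}.
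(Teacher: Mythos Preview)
Your proposal is correct and is exactly the approach the paper intends: the corollary is stated without proof as an immediate specialization of Theorem~\ref{f(dq)} to the analytic function $f(z)=z^\alpha$ on $\mathbb C\setminus(-\infty,0]$, and your identification of $f_x=f_{iy}=\alpha z^{\alpha-1}$ and of $h(A)$ with the scalar $\imagpart(A^\alpha)/\imagpart(A)$ (via Theorem~\ref{f(q)}, since $\imagpart(A^\alpha)=h(a_0+i|\bm a_1|)\,\bm a_1$ is parallel to $\imagpart(A)=\bm a_1$) is precisely what is needed.
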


Note that for $\alpha = -1$, it can be shown that this is equivalent to the formula
\begin{equation}
(A + \epsilon B)^{-1} = A^{-1} - \epsilon A^{-1} B A^{-1} \quad (A \ne 0).
\end{equation}

Finally, we reproduce Selig's formula for the Cayley Transform \cite{selig}.

\begin{cor}
With the hypotheses of Theorem~\ref{f(dq)}, if $A \ne 1$, then we have
\begin{equation}
\begin{aligned}
\frac{1 + A + \epsilon B}{1 - (A + \epsilon B)}
&=
\frac{1+A}{1-A} + \epsilon \frac{2}{(1-A)^2} \bm b_1 + \epsilon \frac{2}{|1-A|^2} \bm b_2 \\
&=
\frac{1+A}{1-A} + 2 \epsilon \frac{1}{(1-A)} B \frac{1}{(1-A)} .
\end{aligned}
\end{equation}
\end{cor}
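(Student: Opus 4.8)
The plan is to apply Theorem~\ref{f(dq)} to the Cayley transform $f(z) = \dfrac{1+z}{1-z}$. This $f$ is valid, since $f(\bar z) = \dfrac{1+\bar z}{1-\bar z} = \overline{f(z)}$, and it is analytic, hence continuously differentiable, on the open set $\Omega = \mathbb C \setminus \{1\}$; the hypothesis $A \ne 1$ is exactly the requirement $a_0 + i|\bm a_1| \in \Omega$. I would first observe that the dual quaternion $1 - (A+\epsilon B) = (1-A)\bigl(1 - \epsilon (1-A)^{-1}B\bigr)$ is invertible precisely when $1-A$ is an invertible quaternion, i.e.\ when $A \ne 1$, and that by the multiplicative property of the functional calculus the value $f(A+\epsilon B)$ furnished by Theorem~\ref{f(dq)} coincides with the literal quotient $(1 + A + \epsilon B)\,(1 - (A+\epsilon B))^{-1}$ appearing on the left-hand side.

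Next I would evaluate the three quantities Theorem~\ref{f(dq)} needs. Since $f$ is analytic, $f_x(A) = f_{iy}(A) = f'(A) = \dfrac{2}{(1-A)^2}$. For $h$, equation~\eqref{h} gives $h(A) = \dfrac{f(z) - f(\bar z)}{z - \bar z}$ with $z = a_0 + i|\bm a_1|$, and the short computation $f(z) - f(\bar z) = \dfrac{(1+z)(1-\bar z) - (1+\bar z)(1-z)}{|1-z|^2} = \dfrac{2(z-\bar z)}{|1-z|^2}$ yields $h(A) = \dfrac{2}{|1-z|^2} = \dfrac{2}{|1-A|^2}$, using $|1-z|^2 = (1-a_0)^2 + |\bm a_1|^2 = |1-A|^2$ (the case $\bm a_1 = 0$ agrees, since then $h(A) = f'(a_0)$). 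Substituting $f(A) = \dfrac{1+A}{1-A}$, $f_x(A) = f_{iy}(A) = \dfrac{2}{(1-A)^2}$, and $h(A) = \dfrac{2}{|1-A|^2}$ into the second displayed identity of Theorem~\ref{f(dq)}, and combining the $b_0$ and $\bm b_1$ contributions into $B_1$ — legitimate because $\dfrac{2}{(1-A)^2}$ is a quaternion whose vector part is parallel to $\bm a_1$ and so commutes with the real number $b_0$ and with the vector $\bm b_1$ — gives the first claimed expression.

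For the coordinate-free second form, I would rewrite $\dfrac{2}{(1-A)^2} B_1 + \dfrac{2}{|1-A|^2} \bm b_2$ as $2\,\dfrac{1}{1-A}\, B\, \dfrac{1}{1-A}$, where $B = B_1 + \bm b_2$. The $B_1$ part is immediate, since $\dfrac{1}{1-A}$ commutes with $B_1$. For the $\bm b_2$ part, the identity $q\,\bm b_2 = \bm b_2\,\bar q$ — valid for any quaternion $q$ whose vector part is parallel to $\bm a_1$, hence perpendicular to $\bm b_2$ — applied with $q = 1-A$ gives $\bm b_2 = q^{-1}\bm b_2\bar q$, hence $q^{-1}\bm b_2 q^{-1} = \bm b_2\,\bar q^{-1}q^{-1} = \dfrac{\bm b_2}{|1-A|^2}$, so the two forms agree. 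Once Theorem~\ref{f(dq)} is in hand, none of this is difficult; the only points demanding any care are the identification of the functional-calculus value with the literal quotient in the first paragraph, and this commutation/anticommutation bookkeeping when passing between the two displayed forms, which I would regard as the main — if still elementary — obstacle.
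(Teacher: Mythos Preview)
Your argument is correct and is exactly the intended route: the paper states this corollary as an immediate application of Theorem~\ref{f(dq)} with $f(z)=(1+z)/(1-z)$ and gives no separate proof, so your computation of $f'$, $h$, and the commutation/anticommutation bookkeeping for the second form is precisely what is needed. Note that your derivation correctly yields $B_1=b_0+\bm b_1$ in the $\dfrac{2}{(1-A)^2}$ term, whereas the displayed statement has only $\bm b_1$; this is a typo in the paper (the second line, with the full $B$, confirms your version).
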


Now we start the proof of Theorem~\ref{f(dq)}.  As stated above, these first two results come straight from equation~\eqref{proto f(dq)}.  Lemma~\ref{f(d)} is found in \cite{wiki-1}.

\begin{lemma}
\label{f(d)}
Let $f:\Omega \to \mathbb R$ be a continuously differentiable function, where $\Omega$ is open in $\mathbb R$.  Then $f$ extends to dual numbers $a + \epsilon b$ with $a \in \Omega$ by
\begin{equation}
\label{dual number case}
f(a + \epsilon b ) = f(a) + \epsilon f'(a) b_0 .
\end{equation}
\end{lemma}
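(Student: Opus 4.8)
The plan is to verify directly that the assignment sending a function $f$ to the dual number $f(a) + \epsilon f'(a) b$ satisfies the three defining properties of a functional calculus from the introduction, taking the class of functions to be $C^1(\Omega,\mathbb R)$ with the topology of uniform convergence of the function together with its derivative on compact subsets of $\Omega$. (Alternatively, one can simply note that this formula is the specialization of equation~\eqref{proto f(dq)} to $A = a$, $B = b$ real, since then $\frac{d}{dr} f(a + rb)\big|_{r=0} = f'(a) b$; but below I spell out the verification.)

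For the algebraic properties: if $f(z) = 1$ then $f' = 0$ and the formula returns $1$, while if $f(z) = z$ then $f' = 1$ and it returns $a + \epsilon b$. Additivity and multiplication by a real scalar are immediate since $f \mapsto (f(a), f'(a))$ is linear, and conjugation acts trivially because everything is real. The one computation is multiplicativity: using $\epsilon^2 = 0$ and the Leibniz rule,
\[
\bigl(f(a) + \epsilon f'(a) b\bigr)\bigl(g(a) + \epsilon g'(a) b\bigr) = (fg)(a) + \epsilon\bigl(f'(a) g(a) + f(a) g'(a)\bigr) b = (fg)(a) + \epsilon (fg)'(a) b ,
\]
which is exactly the formula applied to $fg$. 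Continuity in $f$ is also clear: if $f_n \to f$ in the stated topology then $f_n(a) \to f(a)$ and $f_n'(a) \to f'(a)$, hence $f_n(a + \epsilon b) \to f(a + \epsilon b)$ in the dual numbers.

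To see that this is the \emph{only} functional calculus, I would first observe that the algebraic axioms force agreement with the formula on every real polynomial, by induction on monomials using $(a+\epsilon b)^n = a^n + \epsilon\, n a^{n-1} b$. Then, given $f \in C^1(\Omega)$ and a compact interval $K \subset \Omega$ with $a$ in its interior, I would approximate $f'$ uniformly on $K$ by polynomials $p_n$ (Weierstrass) and set $q_n(x) = f(a) + \int_a^x p_n$, so that $q_n \to f$ and $q_n' \to f'$ uniformly on $K$; the continuity axiom then gives $f(a + \epsilon b) = \lim_n q_n(a + \epsilon b) = \lim_n\bigl(q_n(a) + \epsilon q_n'(a) b\bigr) = f(a) + \epsilon f'(a) b$.

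There is no real obstacle here: the content is the classical fact that evaluating a $C^1$ function modulo $\epsilon^2$ retains exactly the first-order Taylor term, and everything reduces to bookkeeping. The only point needing a word of care is phrasing the density of polynomials so that the continuity axiom can legitimately be invoked; in this one-real-variable setting that is handled by the Weierstrass theorem as above, so no appeal to a more elaborate Stone-Weierstrass argument is needed.
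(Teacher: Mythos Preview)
Your proof is correct and follows exactly the approaches the paper itself indicates: the paper does not spell out a proof of this lemma but remarks that it ``comes straight from equation~\eqref{proto f(dq)}'' and that Lemmas~\ref{f(d)} and~\ref{f(dq) commute} ``are best proved'' by checking monomials and then extending via Stone--Weierstrass, with the alternative of verifying closure under the functional-calculus operations. You have carried out precisely that programme (and noted the \eqref{proto f(dq)} shortcut as well), adding a careful choice of $C^1$-topology and a clean Weierstrass approximation for the passage from polynomials to general $C^1$ functions.
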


\begin{lemma}
\label{f(dq) commute}
Let $f:\Omega \to \mathbb C$ be a valid continuously differentiable function, where $\Omega$ is open in $\mathbb C$.  Then $f$ extends to dual quaternions $A + \epsilon B$ where $A$ and $B$ commute with
\begin{equation}
\label{commuting case}
f(A + \epsilon B ) = f(A) + \epsilon f_x(A) b_0
+ \epsilon f_{iy}(A) \bm b_1 ,
\end{equation}
where $B = b_0 + \bm b_1$, $b_0$ is real, and $\bm b_1$ is a 3-vector.
\end{lemma}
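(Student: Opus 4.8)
The plan is to verify directly that the assignment $f \mapsto f(A) + \epsilon f_x(A) b_0 + \epsilon f_{iy}(A) \bm b_1$ is a functional calculus on the valid $C^1$ functions defined near $a_0 + i|\bm a_1|$, in the spirit sketched in the introduction; this establishes~\eqref{commuting case} for all such $f$ at once, and in particular for polynomials. Two preliminary remarks make the formula meaningful and isolate the role of the hypothesis. First, $f_x$ and $f_{iy}$ are themselves valid: writing $f = u + iv$, validity of $f$ forces $u$ to be even and $v$ odd in $y$, hence $u_x, v_y$ even and $u_y, v_x$ odd in $y$, which is precisely $f_x(\bar z) = \overline{f_x(z)}$ and $f_{iy}(\bar z) = \overline{f_{iy}(z)}$; so $f_x(A)$ and $f_{iy}(A)$ are defined through Theorem~\ref{f(q)}. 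Second, because $B$ commutes with $A$ the 3-vector $\bm b_1$ is parallel to $\bm a_1$ (or $\bm a_1 = 0$), so $b_0$, $\bm b_1$, $f(A)$, $f_x(A)$ and $f_{iy}(A)$ pairwise commute (the last three lie in the commutative subalgebra $\mathbb R \oplus \mathbb R\, \bm a_1$, which $\bm b_1$ commutes with) — this is what lets the Leibniz rule survive passage to quaternions.

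The verification of the axioms is then routine. For $f(z) = 1$ the formula returns $1$; for $f(z) = z$ one has $f_x = f_{iy} = 1$, so it returns $A + \epsilon(b_0 + \bm b_1) = A + \epsilon B$. Additivity and real homogeneity are immediate from $(f+g)_x = f_x + g_x$, $(af)_x = a f_x$, and so on, together with the corresponding properties from Theorem~\ref{f(q)}. For products one combines the Leibniz identities $(fg)_x = f_x g + f g_x$ and $(fg)_{iy} = f_{iy} g + f g_{iy}$ with the multiplicativity of the quaternionic calculus of Theorem~\ref{f(q)}; expanding both sides of $(fg)(A + \epsilon B) = f(A + \epsilon B)\, g(A + \epsilon B)$ and matching coefficients of $\epsilon$, the two sides agree precisely because all the quaternions in sight commute. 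For conjugation one uses $(\bar f)_x = \overline{f_x}$ and $(\bar f)_{iy} = -\overline{f_{iy}}$ (differentiate $f(x - iy) = \overline{f(x+iy)}$ in $x$ and in $y$), together with $\overline{\bm b_1} = -\bm b_1$ and the conjugation clause of Theorem~\ref{f(q)}, to obtain $(\bar f)(A + \epsilon B) = \overline{f(A + \epsilon B)}$. Continuity in the $C^1$ topology is inherited from Theorem~\ref{f(q)}, whose norm identity bounds $|f(A)|$, $|f_x(A)|$, $|f_{iy}(A)|$ by the values of $|f|$, $|f_x|$, $|f_{iy}|$ at $a_0 + i|\bm a_1|$. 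If one instead prefers to pass through polynomials, the monomial computation is a single line: when $A$ and $B$ commute, $(A + \epsilon B)^n = A^n + \epsilon n A^{n-1} B$ by~\eqref{proto f(dq)}, while $f_x = f_{iy} = n z^{n-1}$ for $f(z) = z^n$, so~\eqref{commuting case} holds; one then extends to all valid $C^1$ functions by the Stone--Weierstrass theorem, approximating $f$ and its first derivatives uniformly on a compact neighbourhood of $a_0 + i|\bm a_1|$ and symmetrising to keep the approximants valid.

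I expect the only genuinely delicate point to be the product step. The base cases, additivity and homogeneity are forced, but the Leibniz identity only closes up at the level of quaternions because $f(A)$, $f_x(A)$, $f_{iy}(A)$, $b_0$ and $\bm b_1$ all lie in one commutative subalgebra; this, together with the fact that commutativity of $A$ and $B$ is exactly what legitimises the decomposition $B = b_0 + \bm b_1$ with $\bm b_1 \parallel \bm a_1$, is where the hypothesis is really used. Everything else amounts to unwinding the definitions of $f_x$ and $f_{iy}$ and quoting Theorem~\ref{f(q)}.
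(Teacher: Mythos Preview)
Your proof is correct and follows exactly the approaches the paper itself sketches: the paper says this lemma ``comes straight from equation~\eqref{proto f(dq)}'' and is ``best proved'' by checking monomials (your closing paragraph), while also mentioning the alternative of verifying closure under the functional-calculus operations on $1$, $z$, sums, products, conjugates, and real scalars (your main route). You have simply supplied the details the paper leaves to the reader, including the observation that commutativity of $A$ and $B$ forces $\bm b_1 \parallel \bm a_1$, which is precisely what makes the Leibniz step close up.
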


\begin{lemma}
\label{f(dq) anti commute}
Let $f:\Omega \to \mathbb C$ be a valid continuously differentiable function, where $\Omega$ is an open subset of $i\mathbb R$.  Define the continuous real valued function
\begin{equation}
\label{h ident 2}
h(iy) = \begin{cases}
\dfrac{f(iy) - f(-iy)}{2iy} &\text{if $y \ne 0$} \\
f_{iy}(0) &\text{if $y = 0$} .
\end{cases}
\end{equation}
Then $f$ extends to dual quaternions of the form $\eta = \bm a + \epsilon \bm b_1 + \epsilon \bm b_2$, where $\bm a$, $\bm b_1$, and $\bm b_2$ are pure, $\bm b_1$ and $\bm a$ are parallel, and $\bm b_2$ and $\bm a$ are perpendicular, by the formula
\begin{equation}
\label{f(dq) anti commute equ}
f(\eta) = f(\bm a + \epsilon \bm b_1) + \epsilon h(\bm a) \bm b_2 ,
\end{equation}
\end{lemma}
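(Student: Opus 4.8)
The plan is to reduce the anticommuting case to the commuting case already settled in Lemma~\ref{f(dq) commute}, by conjugating $\eta$ with an invertible dual quaternion. Rather than the (correct but opaque) monomial-by-monomial computation, I would exploit a general principle: any functional calculus commutes with conjugation by an invertible element, that is, $f(u^{-1}\xi u) = u^{-1}f(\xi)\,u$ for every invertible $u$ and every $f$ in the class. Indeed this holds for $f(z) = 1$ and $f(z) = z$, it is preserved under sums, products, conjugation (using that $u\bar u$ is a central dual number), and real scalar multiples, and it is stable under limits (or, directly, $(u^{-1}\xi u)^n = u^{-1}\xi^n u$ for monomials); hence it holds for valid polynomials and then, by the Stone--Weierstrass Theorem, for all valid continuously differentiable $f$.

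Assume first $\bm a \neq 0$. Set $u = \exp(\epsilon\bm c) = 1 + \epsilon\bm c$, where $\bm c$ is the $3$-vector perpendicular to $\bm a$ satisfying $2\,\bm a \times \bm c = \bm b_2$; this exists and is unique because $\bm b_2$ is perpendicular to $\bm a$. Then $u^{-1} = 1 - \epsilon\bm c$, and using $\epsilon^2 = 0$ and the identity $\bm p\bm q - \bm q\bm p = 2\,\bm p\times\bm q$ for $3$-vectors,
\[
u\,\eta\,u^{-1} = \bm a + \epsilon\bigl(\bm b_1 + \bm b_2 + \bm c\bm a - \bm a\bm c\bigr) = \bm a + \epsilon\bigl(\bm b_1 + \bm b_2 - 2\,\bm a\times\bm c\bigr) = \bm a + \epsilon\bm b_1 .
\]
Since $\bm a$ and $\bm b_1$ commute, $f(\bm a + \epsilon\bm b_1)$ is provided by Lemma~\ref{f(dq) commute} (whose right-hand side here only involves the values of $f$ on $i\mathbb R$), so I would \emph{define} $f(\eta) := u^{-1} f(\bm a + \epsilon\bm b_1)\,u$. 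Because conjugation by the fixed invertible $u$ is a continuous algebra automorphism that commutes with quaternionic conjugation, $f \mapsto f(\eta)$ then inherits the functional-calculus axioms from Lemma~\ref{f(dq) commute}, and it sends $z$ to $u^{-1}(\bm a + \epsilon\bm b_1)u = \eta$, as required.

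It remains to check that this agrees with the claimed formula. By Lemma~\ref{f(dq) commute}, $f(\bm a + \epsilon\bm b_1) = f(\bm a) + \epsilon f_{iy}(\bm a)\bm b_1$, and by Theorem~\ref{f(q)}, $f(\bm a) = g(\bm a) + h(\bm a)\bm a$ with $g(\bm a) = g(i|\bm a|)$ real and $h(\bm a) = h(i|\bm a|)$ as in~\eqref{h ident 2}. Expanding $(1-\epsilon\bm c)\bigl(f(\bm a) + \epsilon f_{iy}(\bm a)\bm b_1\bigr)(1+\epsilon\bm c)$ and dropping $\epsilon^2$ terms, the only change from $f(\bm a + \epsilon\bm b_1)$ is the summand $\epsilon\bigl(f(\bm a)\bm c - \bm c f(\bm a)\bigr)$; as $g(\bm a)$ is central, this commutator equals $\epsilon\,h(\bm a)(\bm a\bm c - \bm c\bm a) = \epsilon\,h(\bm a)(2\,\bm a\times\bm c) = \epsilon\,h(\bm a)\,\bm b_2$, giving precisely~\eqref{f(dq) anti commute equ}. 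Finally I would clear two loose ends: the value does not depend on the choice of $\bm c$, since replacing $\bm c$ by $\bm c + \lambda\bm a$ multiplies $u$ by $1+\epsilon\lambda\bm a$, which commutes with both $f(\bm a+\epsilon\bm b_1)$ and $\epsilon\,h(\bm a)\bm b_2$; and in the degenerate case $\bm a = 0$ the splitting $B = \bm b_1 + \bm b_2$ is not canonical, so one takes $\bm b_2 = 0$ and reads off the commuting instance $f(\epsilon\bm b_1) = f(0) + \epsilon f_{iy}(0)\bm b_1$ of Lemma~\ref{f(dq) commute} with $f_{iy}(0) = h(0)$. The only step I expect to require real care is the conjugation-invariance principle for non-polynomial $f$; but that is just another instance of the closure-and-density argument used elsewhere, whereas the conceptual point — that $\eta$ is an inner conjugate of the commuting dual quaternion $\bm a + \epsilon\bm b_1$ — costs nothing once one thinks to look for it.
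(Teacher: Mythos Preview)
Your argument is correct, but it is not the route the paper takes for this lemma. The paper's proof is purely algebraic: it writes $f(z)=k(z^2)+l(z^2)z$ with $k,l$ polynomials in one variable (obtained from the even and odd parts of $f$), observes that $\eta^2=(\bm a+\epsilon\bm b_1)^2$ because $\epsilon\bm b_2$ anticommutes with $\bm a+\epsilon\bm b_1$ and squares to zero, and then reads off
\[
f(\eta)=k(\eta^2)+l(\eta^2)\eta=k(\alpha^2)+l(\alpha^2)(\alpha+\epsilon\bm b_2)=f(\alpha)+h(\alpha)\epsilon\bm b_2,\qquad \alpha=\bm a+\epsilon\bm b_1 .
\]
Your approach instead conjugates $\eta$ by $u=1+\epsilon\bm c$ to land in the commuting case of Lemma~\ref{f(dq) commute}, then pulls the answer back. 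This is exactly the ``Chasles-type'' alternative the paper sketches \emph{after} the proof of Theorem~\ref{f(dq)} (with $\bm r=\tfrac{\bm b\bm a-\bm a\bm b}{4|\bm a|^2}$ playing the role of your $\bm c$), so you have independently found the second method the author alludes to. The trade-offs: the paper's even/odd argument needs no case split at $\bm a=0$, uses nothing about invertibility or inner automorphisms, and is what generalises verbatim to Proposition~\ref{analytic} on anti-commuting elements of an arbitrary complex algebra; your conjugation argument is more geometric, makes the reduction to Lemma~\ref{f(dq) commute} explicit, and gives a cleaner explanation of \emph{why} the perpendicular part $\bm b_2$ enters only through the scalar $h(\bm a)$. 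One small simplification in your write-up: since $\bm c$ is pure, $\bar u=1-\epsilon\bm c=u^{-1}$, so compatibility with quaternionic conjugation is immediate and you do not need the ``$u\bar u$ central'' remark.
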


\begin{proof}  Without loss of generality, we may assume that $f$ is a polynomial.  Define the function
\begin{equation}
g(iy) = f(iy) + f(-iy) .
\end{equation}
Since $g$ and $h$ are even polynomials, we can create polynomials $k$ and $l$ on an open subset of $[-\infty,0]$ by
\begin{equation}
k(z) = g(\sqrt z), \quad l(z) = h(\sqrt z) .
\end{equation}
Then we have the identities
\begin{equation}
\label{f g h k l ident}
f(z) = k(z^2) + l(z^2) z , \quad k(z^2) = g(z), \quad l(z^2) = h(z) .
\end{equation}
Let $\alpha = \bm a + \epsilon \bm b_1$, and $\beta = \epsilon\bm b_2$.  Use equation~\eqref{proto f(dq)} to replace $z$ with $\eta$ in equations~\eqref{f g h k l ident}.  Note that $\eta^2 = \alpha^2$.  Hence
\begin{equation}
\begin{aligned}
f(\eta)
&= k(\eta^2) + l(\eta^2) \eta = k(\alpha^2) + l(\alpha^2) \eta \\
&= g(\alpha) + h(\alpha) (\alpha + \beta) = f(\alpha) + h(\alpha) \beta.
\end{aligned}
\end{equation}
\end{proof}

\begin{proof}[Proof of Theorem~\ref{f(dq)}]
By Lemma~\ref{f(dq) commute}, we only need to show
\begin{equation}
f(A+\epsilon B) = f(A + \epsilon B_1) + \epsilon h(A) \bm b_2 .
\end{equation}
Without loss of generality, assume $f$ is a polynomial.  Define the functions $u,v:(\Omega-a_0) \cap i\mathbb R \to \mathbb C$ by
\begin{align}
u(iy) &= f(a_0 + iy) , \\
v(iy) &= f_{x}(a_0 + iy) b_0,
\end{align}
so that by Lemma~\ref{f(d)}, applied to the real and imaginary parts of $x \mapsto f(x + iy)$, and replacing $x$ by $a_0 + \epsilon b_0$, we have
\begin{equation}
f(a_0 + \epsilon b_0 + i y) = u(iy) + \epsilon v(iy) .
\end{equation}
Now apply Lemma~\ref{f(dq) anti commute} to $u$ and $v$.  (A second derivative of $f$ appears when applying the lemma to $v$, but this disappears in the final result because $\epsilon^2 = 0$.)
\end{proof}

During the writing of this paper, we came across a different approach to proving Theorem~\ref{f(dq)} motivated by Chasles' Theorem and the approach taken in \cite{wang-et-al}.  It can be shown that any dual quaternion may be factored as
\begin{equation}
a + \bm a_1 + \epsilon (b + \bm b_1) = (1 + \epsilon \bm r) (a + \bm a_1 + \epsilon \tilde B) (1 - \epsilon \bm r),
\end{equation}
where $\tilde B$ commutes with $\bm a_1$, and
\begin{equation}
\bm r = \begin{cases} \frac{\bm b_1 \bm a_1 - \bm a_1 \bm b_1 }{4|\bm a_1|^2} & \text{if $\bm a_1 \ne 0$} \\ 0 & \text{if $\bm a_1 = 0$} \end{cases}
\end{equation}
is a 3-vector.  (A similar formula is also in \cite{selig}).  Also, if $f:\Omega \to \mathbb C$ is a valid polynomial, then
\begin{equation}
f\bigl((1 + \epsilon \bm r) (A + \epsilon B) (1 - \epsilon \bm r)\bigr) = (1 + \epsilon \bm r) f(A + \epsilon B) (1 - \epsilon \bm r) .
\end{equation}
In this way, the computation is reduced to Lemma~\ref{f(dq) commute}.

\bigskip

Finally, we note that the methods of the proof of Lemma~\ref{f(dq) anti commute} give a cute, albeit simple, result for the Dunford-Riesz functional calculus of analytic functions on complex algebras \cite{dunford-et-al}.

\begin{prop}
\label{analytic}
Suppose $f:\Omega \to \mathbb C$ is an analytic function on an open subset $\Omega$ of $\mathbb C$ satisfying $z \in \Omega \Leftrightarrow -z \in \Omega$.  Define the analytic functions on $\{z^2 : z \in \Omega\}$ by
\begin{align}
\label{g a}
k(z) &= \dfrac{f(\sqrt z) + f(-\sqrt z)}{2} ,\\
\label{h a}
l(z) &= \begin{cases}
\dfrac{f(\sqrt z) - f(-\sqrt z)}{2\sqrt z} &\text{if $z \ne 0$} \\
f'(0) &\text{if $z = 0$} ,
\end{cases} \\
\end{align}
If $A$ and $B$ are anti-commuting elements of a complex algebra such that their spectra $\sigma(A) \cup \sigma(B) \cup \sigma(A+B) \subset \Omega$, then
\begin{equation}
f(A+B) = k(A^2+B^2) + (A+B) l(A^2+B^2) .
\end{equation}
\end{prop}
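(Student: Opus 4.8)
The plan is to mimic the proof of Lemma~\ref{f(dq) anti commute}: reduce everything to the single identity
\begin{equation}
\label{proto analytic}
f(w) = k(w^2) + w\, l(w^2), \qquad w \in \Omega,
\end{equation}
together with the observation that anti-commutativity gives $(A+B)^2 = A^2 + AB + BA + B^2 = A^2 + B^2$. Once \eqref{proto analytic} is established and $k$ and $l$ are known to be genuine analytic functions on $\{z^2 : z \in \Omega\}$, the proposition will follow by substituting $w = A+B$ and invoking the composition rule of the holomorphic (Dunford--Riesz) functional calculus.

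First I would record that $\{z^2 : z \in \Omega\}$ is open, which is immediate since the non-constant holomorphic map $z \mapsto z^2$ is open. Next I would check that $k$ and $l$ are well defined and analytic there. The function $w \mapsto f(w) + f(-w)$ is analytic and even on the symmetric domain $\Omega$, hence factors as $w \mapsto \psi(w^2)$ with $\psi$ analytic on $\{z^2 : z \in \Omega\}$ — the standard fact that an even analytic function is an analytic function of $w^2$, checked locally via branches of the square root away from $0$ and via the (even) Taylor series at $0$. Then $k = \tfrac12 \psi$. Likewise $w \mapsto (f(w) - f(-w))/(2w)$ has a removable singularity at $0$ with value $f'(0)$, is analytic and even on $\Omega$, and so equals $w \mapsto l(w^2)$ for an analytic $l$. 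Adding the two factorizations yields \eqref{proto analytic}.

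Then comes the spectral bookkeeping. From $(A+B)^2 = A^2 + B^2$ and the spectral mapping theorem for the polynomial $z \mapsto z^2$, $\sigma(A^2+B^2) = \{\lambda^2 : \lambda \in \sigma(A+B)\} \subseteq \{z^2 : z \in \Omega\}$, so $k(A^2+B^2)$ and $l(A^2+B^2)$ are defined; and $f(A+B)$ is defined because $\sigma(A+B) \subseteq \Omega$. (In fact only the hypothesis $\sigma(A+B) \subseteq \Omega$ is used.) Since \eqref{proto analytic} is an identity of analytic functions on $\Omega$, the functional calculus being an algebra homomorphism, together with the substitution rule $h(g(T)) = (h\circ g)(T)$ applied to $g(z) = z^2$ and $T = A+B$, gives
\begin{equation}
f(A+B) = k\bigl((A+B)^2\bigr) + (A+B)\, l\bigl((A+B)^2\bigr) = k(A^2+B^2) + (A+B)\, l(A^2+B^2),
\end{equation}
as claimed.

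The one step that is not purely formal is the factorization of even analytic functions through the squaring map (and the openness of its image); this is the analogue here of the passage from $g,h$ to $k,l$ in Lemma~\ref{f(dq) anti commute}, and is the only place where a short argument, rather than bookkeeping, is required.
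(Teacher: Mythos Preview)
Your proposal is correct and follows precisely the approach the paper indicates: the paper gives no separate proof of this proposition, merely remarking that ``the methods of the proof of Lemma~\ref{f(dq) anti commute}'' yield it, and you have carried out exactly that --- the factorization $f(w)=k(w^2)+w\,l(w^2)$, the anti-commutation identity $(A+B)^2=A^2+B^2$, and then substitution via the Dunford--Riesz calculus in place of the polynomial substitution used in the lemma. Your added care in verifying that $k$ and $l$ are genuinely analytic on $\{z^2:z\in\Omega\}$, and your observation that only the hypothesis $\sigma(A+B)\subset\Omega$ is actually used, are welcome refinements.
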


Applying this to $(A+B)^n$, where $n$ is a non-negative integer, gives the coefficients of the so called Pauli-Pascal triangle \cite{horn,sloane}.




\end{document}